\title{Non-vanishing theorem for lc pairs admitting a Calabi--Yau pair}
\author{Kenta Hashizume}
\date{2017/8/3, version 0.03}
\keywords{non-vanishing theorem, varieties of Calabi--Yau type}
\subjclass[2010]{14E30}
\address{Department of Mathematics, Graduate School of Science, 
Kyoto University, Kyoto 606-8502, Japan}
\email{hkenta@math.kyoto-u.ac.jp}
\newtheorem{thm}{Theorem}[section]
\newtheorem{lem}[thm]{Lemma}
\newtheorem{conj}[thm]{Conjecture}
\theoremstyle{definition}
\newtheorem{defn}[thm]{Definition}
\newtheorem{rem}[thm]{Remark}
\newtheorem*{ack}{Acknowledgments} 
\newtheorem{say}[thm]{}
\newtheorem{step}{Step}
\newtheorem{step3}{Step}
\begin{document}

\maketitle

\begin{abstract}
We prove the non-vanishing conjecture for lc pairs $(X,\Delta)$ when $X$ is of Calabi--Yau type. 
\end{abstract}

\tableofcontents

\section{Introduction}\label{sec1}
Throughout this paper we will work over the complex number field.

In this paper we deal with varieties of Calabi--Yau type.

\begin{defn}
Let $X$ be a normal projective variety. 
Then $X$ is {\em of Calabi--Yau type} if there is an $\mathbb{R}$-divisor $C\geq0$ such that $(X,C)$ is lc and $K_{X}+C\equiv 0$.  
\end{defn}

The main result of this paper is the non-vanishing theorem for lc pairs whose underlying variety is of Calabi--Yau type.

\begin{thm}\label{thmmain}
Let $X$ be a normal projective variety. 
Suppose that $X$ is of Calabi--Yau type. 

Then, for any lc pair $(X,\Delta)$, the non-vanishing conjecture holds. 
In other words, if $K_{X}+\Delta$ is pseudo-effective there exists an $\mathbb{R}$-divisor $E\geq0$ such that $K_{X}+\Delta\sim_{\mathbb{R}}E$. 
\end{thm}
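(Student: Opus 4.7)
The plan is to use the Calabi--Yau boundary $C$ as a controlled perturbation of $\Delta$ that reduces the lc non-vanishing problem to one for a klt pair, where more powerful minimal model theory applies.

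By passing to a dlt modification I may assume $(X,\Delta)$ is dlt, since non-vanishing transfers under the induced birational contraction. For a small parameter $\epsilon\in(0,1)$, set
\begin{equation*}
  \Delta_\epsilon \;=\; (1-\epsilon)\Delta + \epsilon C.
\end{equation*}
As a convex combination of lc boundaries $(X,\Delta_\epsilon)$ is lc, and
\begin{equation*}
  K_X+\Delta_\epsilon \;=\; (1-\epsilon)(K_X+\Delta)+\epsilon(K_X+C) \;\equiv\; (1-\epsilon)(K_X+\Delta)
\end{equation*}
via $K_X+C\equiv 0$, so $K_X+\Delta_\epsilon$ is again pseudo-effective. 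A further small ample perturbation (or, when flexibility allows, a careful choice of $C$ sharing no coefficient-one component with $\Delta$) makes $(X,\Delta_\epsilon)$ klt.

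I would then invoke non-vanishing for klt pairs---available through BCHM after adding a tiny ample to render the class big, or through Birkar's non-vanishing in the applicable cases---to produce $E_\epsilon\geq 0$ with $K_X+\Delta_\epsilon\sim_{\mathbb{R}}E_\epsilon$. To transfer the conclusion back to $K_X+\Delta$ itself I would use
\begin{equation*}
  (1-\epsilon)(K_X+\Delta) \;=\; (K_X+\Delta_\epsilon) - \epsilon(K_X+C);
\end{equation*}
provided $K_X+C\sim_{\mathbb{R}}0$, this gives $(1-\epsilon)(K_X+\Delta)\sim_{\mathbb{R}}E_\epsilon$, so $K_X+\Delta\sim_{\mathbb{R}}(1-\epsilon)^{-1}E_\epsilon$ is $\mathbb{R}$-linearly equivalent to an effective divisor as required.

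The principal obstacle is precisely upgrading $K_X+C\equiv 0$ to the stronger $K_X+C\sim_{\mathbb{R}}0$---the log abundance conjecture for numerically trivial lc pairs, which is open in general. Since the theorem is stated unconditionally, I would anticipate the proof reorganising the above strategy in order to bypass this abundance input: a natural route is to run a $(K_X+\Delta)$-MMP with scaling directly on $X$, using the presence of $C$ to bound the scaling thresholds and force termination, and then to extract the effective representative from a good log minimal model---thereby sidestepping the need to first prove $K_X+C\sim_{\mathbb{R}}0$ for the Calabi--Yau pair itself.
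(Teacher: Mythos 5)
The ``principal obstacle'' you identify is not an obstacle at all: the abundance theorem for numerically trivial lc pairs (upgrading $K_X+C\equiv 0$ to $K_X+C\sim_{\mathbb{R}}0$) is a known theorem of Gongyo \cite{gongyo}, building on \cite{ckp}, \cite{kawamata}, \cite{ambro}, \cite{nakayama-zariski-decom}, and the paper invokes it as the first line of its proof. The genuine gaps in your argument lie elsewhere, and they are fatal.

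First, the convex combination $\Delta_\epsilon=(1-\epsilon)\Delta+\epsilon C$ need not be klt. Discrepancies are affine in the boundary, so $a(P,X,\Delta_\epsilon)=(1-\epsilon)a(P,X,\Delta)+\epsilon\,a(P,X,C)$, which equals $-1$ precisely when $P$ is an lc place of both $(X,\Delta)$ and $(X,C)$. You are not free to choose $C$: the hypothesis only grants the \emph{existence} of some $C$ with $(X,C)$ lc and $K_X+C\equiv 0$, and $\Delta$ is arbitrary, so the two may share lc centers (and non-divisorial lc centers cannot be avoided by removing coefficient-one components). A ``small ample perturbation'' does not repair this either --- adding an ample divisor to the boundary changes the $\mathbb{R}$-linear equivalence class of $K_X+\Delta_\epsilon$, so an effective representative of the perturbed class gives nothing back for the original pair after letting the perturbation tend to zero.

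Second, and more fundamentally, even if $(X,\Delta_\epsilon)$ were klt, \emph{non-vanishing for klt pairs is not a known theorem}: it is precisely Conjecture~\ref{conjnon} restricted to the klt case, which is open in dimension $\geq 4$. BCHM only establishes it when the \emph{boundary} is big, and the other known cases (rationally connected, $K_X\equiv 0$) are similarly restrictive; the paper lists exactly these. So the reduction you propose lands you on an open problem, not a theorem. Your closing suggestion (run a $(K_X+\Delta)$-MMP with scaling and extract the effective class from a good minimal model) has the same difficulty: for lc pairs termination and the existence of good minimal models hinge on the very non-vanishing one is trying to prove (cf.\ \cite{birkar-existII}), so this is circular. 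The paper's actual argument is an induction on dimension: after normalizing the pseudo-effective threshold to $\tau(X,0;\Delta)=1$, it produces a fibration $X\to Z$ with $\dim Z<\dim X$ and $K_X+\Delta\sim_{\mathbb{R},Z}0$ via \cite[Lemma 3.1]{gongyo-nonvanishing}, decomposes $C=C'+C''$ relative to this fibration (Lemma~\ref{lemdeform}), and then either descends $K_X+\Delta$ to a klt pair on $Z$ via the canonical bundle formula (when $C''=0$) and applies the induction hypothesis, or uses Lemma~\ref{lemlift} together with the ACC theorems of \cite{hmx-acc} to build a Mori fiber space on the base and strictly decrease $\dim Z$, repeating until the dimension drops. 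None of this structure is visible in, or recoverable from, your proposal.
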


Here we recall statement of the non-vanishing conjecture. 

\begin{conj}[Non-vanishing]\label{conjnon}
Let $(X,\Delta)$ be a projective lc pair such that $K_{X}+\Delta$ is pseudo-effective. 
Then there exists an $\mathbb{R}$-divisor $E\geq0$ such that $K_{X}+\Delta \sim_{\mathbb{R}}E$. 
\end{conj}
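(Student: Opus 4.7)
Since this is the non-vanishing conjecture in full generality for projective lc pairs, I would follow the standard reduction scheme from higher-dimensional birational geometry. First, by taking a dlt blow-up $f\colon Y\to X$ with $K_Y+\Gamma=f^{*}(K_X+\Delta)$, it suffices to prove non-vanishing for the dlt pair $(Y,\Gamma)$, since an effective representative on $Y$ pushes forward to one on $X$. I would then try to reduce to the klt case by perturbing the boundary: write $\Gamma=\lfloor\Gamma\rfloor+\{\Gamma\}$ and replace $\lfloor\Gamma\rfloor$ by $(1-\epsilon)\lfloor\Gamma\rfloor+\epsilon A$ for small $\epsilon>0$ and a small ample $A$, invoking extension-type theorems (Hacon--McKernan, Koll\'ar) together with connectedness of the non-klt locus in order to transfer an effective representative from the klt perturbation back to $(Y,\Gamma)$.

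Assuming then that $(Y,\Gamma)$ is klt with $K_Y+\Gamma$ pseudo-effective, I would run a $(K_Y+\Gamma)$-MMP with scaling of an ample to reach a minimal model $(Y',\Gamma')$ on which $K_{Y'}+\Gamma'$ is nef. Existence of such a minimal model for pseudo-effective klt pairs is itself open outside the big setting of Birkar--Cascini--Hacon--McKernan, so either I would have to invoke conditional termination results or work instead with a weak Zariski decomposition of $K_Y+\Gamma$ in the sense of Nakayama, splitting off the negative part and reducing non-vanishing to the nef part.

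On the (possibly weak) minimal model I would split by the numerical Kodaira dimension $\nu=\nu(K_{Y'}+\Gamma')$. When $\nu\geq1$, the classical non-vanishing and basepoint-free theorems of Kawamata--Shokurov produce an effective $\mathbb{R}$-divisor in the class, so this case is handled by essentially standard machinery. The main obstacle, and what I expect to be the decisive step, is the case $\nu=0$: one must upgrade $K_{Y'}+\Gamma'\equiv0$ to $K_{Y'}+\Gamma'\sim_{\mathbb{R}}0$, which is the abundance problem for numerically trivial pairs. This is known in the klt case by Gongyo (building on Nakayama and Campana--P\u{a}un) and in a range of dlt cases by Hacon--Xu, but the full lc statement is precisely the content of the non-vanishing conjecture and remains open. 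Without an additional structural hypothesis on $X$ of the type exploited in Theorem~\ref{thmmain} (namely the existence of an auxiliary lc Calabi--Yau boundary), I do not see how to close this final step in a general lc setting.
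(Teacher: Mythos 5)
The statement you were asked to prove is Conjecture \ref{conjnon} itself, which is an open problem: the paper does not prove it and says so explicitly, establishing only the special case in which $X$ carries an auxiliary lc Calabi--Yau boundary (Theorem \ref{thmmain}). Your sketch is honest about this---you concede at the end that you cannot close the $\nu=0$ case---but as a proof it therefore has an irreparable gap, and in fact the gaps appear earlier than you indicate. The reduction from dlt to klt by replacing $\lfloor\Gamma\rfloor$ with $(1-\epsilon)\lfloor\Gamma\rfloor+\epsilon A$ changes the $\mathbb{R}$-linear equivalence class, and no extension or connectedness argument is known that transfers an effective representative of the perturbed class back to $K_{Y}+\Gamma$; that reduction is essentially as hard as the lc conjecture itself. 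The existence of a minimal model (or a usable weak Zariski decomposition) for a pseudo-effective klt pair that is not big is open, as you note. Most importantly, the case $\nu\geq1$ is not ``standard machinery'': the Kawamata--Shokurov non-vanishing theorem requires $aD-(K_{X}+\Delta)$ to be nef and big, and it does not apply to a nef $K_{Y'}+\Gamma'$ of intermediate numerical dimension; non-vanishing (let alone abundance) for nef klt pairs with $1\leq\nu<\dim X$ is open in dimension $\geq4$. The only settled numerical dimensions in general are $\nu=0$ and $\nu=\dim X$.

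For comparison, the paper's actual theorem uses the Calabi--Yau hypothesis in an essential way and proceeds along a different route from yours: since $K_{X}+C\sim_{\mathbb{R}}0$ with $C\neq0$, the divisor $K_{X}$ is not pseudo-effective, so one may interpolate $\Delta$ with $C$, normalize the pseudo-effective threshold, and apply Gongyo's construction to obtain a nontrivial contraction $X\to Z$ with $K_{X}+\Delta\sim_{\mathbb{R},\,Z}0$; then Lemmas \ref{lemdeform} and \ref{lemlift}, the canonical bundle formula of Fujino--Gongyo, and the ACC theorems of Hacon--M\textsuperscript{c}Kernan--Xu (Theorems \ref{thmacclct} and \ref{thmglobalacc}) drive an induction on $\dim Z$. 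None of these tools is available without the auxiliary boundary $C$, which is precisely the structural hypothesis you correctly identify as missing. So your proposal does not, and with current technology cannot, prove the stated conjecture; it only reproduces the well-known reduction picture and stops at the same open problems the conjecture encodes.
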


Conjecture \ref{conjnon} is one of the most important open problems in the minimal model theory. 
It is known by Birkar \cite{birkar-existII} that Conjecture \ref{conjnon} implies the minimal model conjecture. 
Today Conjecture \ref{conjnon} is known for lc pairs of dimension $\leq3$ but the conjecture is only partially solved in higher dimensional case.  
For example, Conjecture \ref{conjnon} for lc pairs $(X,\Delta)$ of ${\rm dim}\,X\geq4$ is known when 
\begin{itemize}
\item
$(X,\Delta)$ is klt and $\Delta$ is big (cf.~\cite{bchm}), 
\item
$(X,\Delta)$ is klt and $X$ is rationally connected (cf.~\cite{gongyo-nonvanishing}), or
\item
$K_{X}\equiv0$ (cf.~\cite{gongyo}, see also \cite{ckp}, \cite{kawamata}, \cite{ambro} and \cite{nakayama-zariski-decom}). 
\end{itemize}
Moreover the arguments in \cite{gongyo-nonvanishing} and \cite{dhp} show that Conjecture \ref{conjnon} holds for any lc pair $(X,\Delta)$ such that ${\rm dim}\,X=4$ and $X$ is uniruled, though it is not written explicitly in their papers. 
Lazi\'c and Peternell proved Conjecture \ref{conjnon} for terminal $4$-folds under the assumption that $\chi(X,\mathcal{O}_{X})\neq0$ and $K_{X}$ has a singular metric with algebraic singularities and positive curvature current (cf.~\cite[Theorem B]{lazicpeter}).

We note that the case $K_{X}\equiv0$ mentioned above is a special case of Theorem \ref{thmmain}. 
Indeed, when $K_{X}\equiv0$ in Theorem \ref{thmmain}, the statement of the theorem is equivalent to the abundance theorem for numerically trivial lc pairs and it is proved by Gongyo \cite{gongyo} (see also \cite{ckp} and \cite{kawamata}). 
Therefore, in view of Conjecture \ref{conjnon}, Theorem \ref{thmmain} can be regarded as a generalization of the result of \cite{gongyo}.


The contents of this paper are as follows: 
In Section \ref{sec2} we collect some notations, definitions and important theorems. 
In Section \ref{sec3} we prove Theorem \ref{thmmain}. 

\begin{ack}
The author was partially supported by JSPS KAKENHI Grant Number JP16J05875 from JSPS. The author thanks Professor Osamu Fujino for discussions and warm encouragement. 
He also thanks Professors Yoshinori Gongyo and Yusuke Nakamura for comments.  
\end{ack}

\section{Preliminaries}\label{sec2}
In this section we collect notations, definitions and some important theorems. 

\begin{say}[Singularities of pairs]
A {\em pair} $(X,\Delta)$ consists of a normal variety $X$ and a boundary $\mathbb{R}$-divisor $\Delta$, that is, an $\mathbb{R}$-divisor whose coefficients belong to $[0,1]$, on $X$ such that $K_{X}+\Delta$ is $\mathbb{R}$-Cartier. 

Let $(X,\Delta)$ be a pair and let $D$ be a prime divisor over $X$. 
Then $a(D,X,\Delta)$ denotes the discrepancy of $D$ with respect to $(X,\Delta)$. 
In this paper we use the definitions of Kawamata log terminal (klt, for short) pair, log canonical (lc, for short) pair and divisorially log terminal (dlt, for short) pair written in \cite{kollar-mori} or \cite{bchm}. 
\end{say}

Next we define some models.

\begin{defn}[Log birational model]\label{deflogbir}
Let $\pi\!:\!X \to Z$ be a projective morphism from a normal variety to a variety and let $(X,\Delta)$ be an lc pair.  
Let $\pi'\!:\!X' \to Z$ be a projective morphism from a normal variety to $Z$ and let $\phi\!:\!X \dashrightarrow X'$ be a birational map over $Z$. 
Let $E$ be the reduced $\phi^{-1}$-exceptional divisor on $X'$, that is, $E=\sum E_{j}$ where $E_{j}$ are $\phi^{-1}$-exceptional prime divisors on $X'$. 
Then $(X', \Delta'=\phi_{*}\Delta+E)$ is called a {\em log birational model} of $(X,\Delta)$ over $Z$. 
\end{defn}

\begin{defn}[Log minimal model and Mori fiber space]\label{deflogmin}
Notations as in Definition \ref{deflogbir}, a log birational model $(X', \Delta')$ of $(X,\Delta)$ over $Z$ is a {\em weak log canonical model} ({\em weak lc model}, for short) if 
\begin{itemize}
\item
$K_{X'}+\Delta'$ is nef over $Z$, and 
\item
for any prime divisor $D$ on $X$ which is exceptional over $X'$, we have
$$a(D, X, \Delta) \leq a(D, X', \Delta').$$ 
\end{itemize}
A weak lc model $(X',\Delta')$ of $(X,\Delta)$ over $Z$ is a {\em log minimal model} if 
\begin{itemize}
\item
$(X',\Delta')$ is $\mathbb{Q}$-factorial, and 
\item
the above inequality on discrepancies is strict. 
\end{itemize}
A log minimal model $(X',\Delta')$ of $(X, \Delta)$ over $Z$ is called a {\em good minimal model} if $K_{X'}+\Delta'$ is semi-ample over $Z$.    

On the other hand, a log birational model $(X', \Delta')$ of $(X,\Delta)$ over $Z$ is called a {\em Mori fiber space} if $X'$ is $\mathbb{Q}$-factorial and there is a contraction $X' \to W$ with ${\rm dim}\,W<{\rm dim}\,X'$ such that 
\begin{itemize}
\item
the relative Picard number $\rho(X'/W)$ is one and $-(K_{X'}+\Delta')$ is ample over $W$, and 
\item
for any prime divisor $D$ over $X$, we have
$$a(D,X,\Delta)\leq a(D,X',\Delta')$$
and strict inequality holds if $D$ is a divisor on $X$ and exceptional over $X'$.
\end{itemize} 
\end{defn}

\begin{defn}[Log smooth model]\label{deflogsm}
Let $(X,\Delta)$ be an lc pair and let $f\!:\!Y \to X$ be a log resolution of $(X,\Delta)$. 
Let $\Gamma$ be a boundary $\mathbb{R}$-divisor on $Y$ such that $(Y,\Gamma)$ is log smooth. 
Then $(Y,\Gamma)$ is a {\em log smooth model} of $(X,\Delta)$ if we can write 
$$K_{Y}+\Gamma=f^{*}(K_{X}+\Delta)+F$$
with an effective $f$-exceptional divisor $F$ such that every $f$-exceptional prime divisor $E$ satisfying $a(E,X,\Delta)>-1$ is a component of $F$ and $\Gamma-\llcorner \Gamma \lrcorner$.  
\end{defn}

Our definition of log minimal model and Mori fiber space is slightly different from that of \cite{birkar-flip}. 
The difference is that we do not assume those models to be dlt. 
But this difference is intrinsically not important (see \cite[Remark 2.7]{has-trivial}). 
In our definition, any weak lc model $(X',\Delta')$ of a $\mathbb{Q}$-factorial lc pair $(X,\Delta)$ constructed with the $(K_{X}+\Delta)$-MMP
is a log minimal model of $(X,\Delta)$ even though $(X',\Delta')$ may not be dlt. 

The following theorem proved by Birkar \cite{birkar-flip} is frequently implicitly used in this paper.

\begin{thm}[cf.~{\cite[Theorem 4.1]{birkar-flip}}]\label{thmtermi}
Let $(X,\Delta)$ be a $\mathbb{Q}$-factorial lc pair such that $(X,0)$ is klt, and let $\pi\!:\!X \to Z$ be a projective morphism of normal quasi-projective varieties. 
If there exists a log minimal model of $(X,\Delta)$ over $Z$, then any $(K_{X}+\Delta)$-MMP over $Z$ with scaling of an ample divisor terminates. 
\end{thm}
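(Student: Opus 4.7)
The plan is to analyze the scaling MMP by its limiting scaling number $\lambda$ and to combine a BCHM-based termination statement for the case $\lambda>0$ with a negativity-lemma argument against the given log minimal model for the case $\lambda=0$. Let $(Y,\Delta_{Y})$ be a log minimal model of $(X,\Delta)$ over $Z$ and fix an ample $\mathbb{R}$-divisor $H$ on $X$ with $K_{X}+\Delta+H$ nef over $Z$. Running the $(K_{X}+\Delta)$-MMP over $Z$ with scaling of $H$ yields a sequence
\[
(X,\Delta)=(X_{0},\Delta_{0})\dashrightarrow(X_{1},\Delta_{1})\dashrightarrow\cdots
\]
together with a non-increasing sequence of scaling numbers $\lambda_{0}\geq\lambda_{1}\geq\cdots\searrow\lambda\geq 0$; writing $H_{i}$ for the birational transform of $H$ on $X_{i}$, each $K_{X_{i}}+\Delta_{i}+\lambda_{i}H_{i}$ is nef over $Z$ and trivial on the extremal ray contracted at step $i$.

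In the case $\lambda>0$ I would use the klt hypothesis on $(X,0)$ together with the ampleness of $H$ to reduce to a klt-with-big-boundary MMP. A general $H'\sim_{\mathbb{R}}H$ together with a small rational $\mu\in(0,\lambda)$ allows one to pass, possibly via a dlt modification, to a $\mathbb{Q}$-factorial klt pair $(X^{\dag},\Delta^{\dag})$ with $\Delta^{\dag}$ big, whose $(K_{X^{\dag}}+\Delta^{\dag})$-MMP with scaling mirrors the steps of the original MMP because every extremal ray being contracted is $H$-positive and hence strictly $(K_{X^{\dag}}+\Delta^{\dag})$-negative. Termination of the new MMP then follows from BCHM, forcing termination of the original.

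The case $\lambda=0$ I would treat by tracking discrepancies against $(Y,\Delta_{Y})$. For each $i$, take a common resolution $p_{i}\colon W_{i}\to X_{i}$, $q_{i}\colon W_{i}\to Y$ of $X_{i}\dashrightarrow Y$ and set
\[
E_{i}:=p_{i}^{*}(K_{X_{i}}+\Delta_{i})-q_{i}^{*}(K_{Y}+\Delta_{Y}).
\]
Since $K_{Y}+\Delta_{Y}$ is nef over $Z$ and the discrepancy inequalities of Definition \ref{deflogmin} hold at every $(X_{i},\Delta_{i})$, the negativity lemma gives $E_{i}\geq 0$ and $q_{i}$-exceptional. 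Each flip $X_{i}\dashrightarrow X_{i+1}$ strictly increases the log discrepancy along some prime divisor lying over the flipped locus, so some coefficient of $E_{\bullet}$ drops strictly at each step while no other coefficient rises; since the potential support of $E_{\bullet}$ is contained in a fixed finite set of prime divisors determined by $X_{0}$ and the birational map to $Y$, only finitely many strict drops can occur and the MMP terminates.

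The main obstacle is the second case: rigorously verifying that each flip produces a strict drop of some coefficient of $E_{\bullet}$ along a divisor in a uniformly bounded finite set. This is where one uses the negativity lemma on both the flipping and the flipped contractions together with the uniform lower bound on log discrepancies supplied by the log minimal model; the first case is comparatively routine once the klt reduction is in place.
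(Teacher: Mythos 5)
The paper does not give a proof of this theorem: it is recorded as a result of Birkar (cited as \cite[Theorem 4.1]{birkar-flip}) and is used as a black box throughout. So there is no proof in the paper to compare against; what follows assesses your argument on its own terms.

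Your reduction in the case $\lambda>0$ (bound the scaling number below, perturb to a $\mathbb{Q}$-factorial klt pair with big boundary using the klt hypothesis on $(X,0)$ and the ampleness of $H$, invoke \cite{bchm}) is the standard one and is essentially correct. The $\lambda=0$ branch, however, has the gap that you yourself flag, and it is a genuine one. The difficulty-type count relies on the premise that the coefficients of $E_{\bullet}$ lie on a fixed finite set of prime divisors determined by $X_{0}$ and $Y$, but this is false: the divisors whose log discrepancy strictly increases under a flip are exactly those whose center lies in the flipping locus, and these are typically exceptional divisors that only appear on high resolutions of $X_{i}$; no a priori finite collection contains them for all $i$. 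Each $E_{i}$ lives on a different common resolution $W_{i}$, and a nonnegative, monotone sequence $E_{i}\geq E_{i+1}$ (compared on a high enough $W$) spread over an ever-growing tower of resolutions can have infinitely many strict drops. The actual argument in \cite{birkar-flip} does not count discrepancy drops at all; instead it shows that for $i\gg 0$ the pair $(X_{i},\Delta_{i})$ is itself a weak log minimal model over $Z$, i.e., $K_{X_{i}}+\Delta_{i}$ is already nef over $Z$, so the scaling MMP has no further step to run. The ingredients are: that $(X_{i},\Delta_{i}+\lambda H_{i})$ is a weak lc model of $(X,\Delta+\lambda H)$ for $\lambda\in[\lambda_{i+1},\lambda_{i}]$; a negativity-lemma comparison with $(Y,\Delta_{Y})$ as $\lambda_{i}\searrow 0$; and, crucially, special termination along the dlt strata, which requires the LMMP in lower dimension. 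That additional machinery is precisely why the present paper cites this statement rather than proving it.
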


Next we recall definition of pseudo-effective threshold. 

\begin{defn}
Let $(X,\Delta)$ be a projective lc pair and let $M\geq0$ be an $\mathbb{R}$-Cartier $\mathbb{R}$-divisor such that $K_{X}+\Delta+M$ is pseudo-effective. 
Then the {\em pseudo-effective threshold} of $M$ with respect to $(X,\Delta)$, denoted by  $\tau(X,\Delta;M)$, is
$$\tau(X,\Delta;M)={\rm inf}\{t\in \mathbb{R}_{\geq 0}\mid K_{X}+\Delta+tM {\rm \; is \; pseudo\mathchar`-effective}\}.$$ 
\end{defn}

We close this section with two important theorems proved by Hacon, M\textsuperscript{c}Kernan and Xu \cite{hmx-acc}. 

\begin{thm}[{cf.~\cite[Theorem 1.1]{hmx-acc}}]\label{thmacclct}
Fix a positive integer $n$, a set $I \subset [0,1]$ and a set $J\subset \mathbb{R}_{>0}$, where $I$ and $J$ satisfy the DCC. 
Let $\mathfrak{T}_{n}(I)$ be the set of lc pairs $(X,\Delta)$, where $X$ is a variety of dimension $n$ and the coefficients of $\Delta$ belong to $I$. 
Then the set 
$$\{ {\rm lct}(X,\Delta;M) \mid (X,\Delta) \in \mathfrak{T}_{n}(I), {\rm \; the \;coefficients \; of\; }M{\rm \;belong\; to\;}J \}$$
satisfies the ACC, where ${\rm lct}(X,\Delta;M)$ is the log canonical threshold of $M$ with respect to $(X,\Delta)$.
\end{thm}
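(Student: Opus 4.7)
I would proceed by induction on $n = \dim X$, arguing by contradiction. Suppose there exists a strictly increasing sequence $t_i = \mathrm{lct}(X_i, \Delta_i; M_i) \to t_\infty$ with $(X_i, \Delta_i) \in \mathfrak{T}_n(I)$ and coefficients of $M_i$ in $J$. The base case $n = 1$ reduces to the ACC of the set $\{(1-a)/b \mid a \in I,\, b \in J\}$, which is immediate from the DCC of $I$ and $J$.

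For the inductive step, first replace each pair by a log resolution so that $(X_i, \Delta_i + t_i M_i)$ is log smooth, and select a prime divisor $E_i$ that realises the threshold: $E_i$ has coefficient exactly $1$ in $\Delta_i + t_i M_i$, while its coefficient in $\Delta_i + (t_i - \varepsilon) M_i$ is strictly less than $1$ for every $\varepsilon > 0$. I would then run a suitably scaled MMP that contracts every exceptional prime divisor except $E_i$, producing a $\mathbb{Q}$-factorial dlt model on which $E_i$ remains a prime divisor of coefficient $1$. Here the termination result Theorem \ref{thmtermi}, applied after passing to a dlt modification when $(X_i, \Delta_i)$ fails to be klt, guarantees the MMP ends either at a minimal model on which one can apply divisorial adjunction along $E_i$, or at a Mori fiber space whose general fibre has strictly smaller dimension.

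The heart of the argument is \emph{adjunction with DCC control}. Shokurov's divisorial adjunction writes the coefficients of the different on $E_i$ in the form $(m - 1 + \sum r_j n_j)/m$ with $r_j \in I \cup (t_i \cdot J)$ and $n_j, m$ positive integers. Combined with the DCC of $I$, $J$ and of the convergent monotone sequence $\{t_i\}$, this forces the boundary of the adjoint pair on $E_i$ to lie in a DCC set depending only on $I$ and $J$. Applying the inductive hypothesis to this restricted pair, with the restriction of $M_i'$ playing the role of the test divisor, bounds the restricted thresholds in an ACC set; since the coefficient of $E_i$ in $\Delta_i' + t_i M_i'$ is identically $1$, a short matching transfers ACC back to $\{t_i\}$, contradicting the strict monotonicity.

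The principal obstacle is executing the MMP reduction in the non-klt setting: one must ensure both that $E_i$ is extractable and preserved throughout the MMP, and that the adjoint data transforms with enough control to keep the DCC structure intact. I would address this by first passing to a dlt modification, choosing the scaling so that $E_i$ is never contracted or flipped, and invoking Theorem \ref{thmtermi} together with a scaling-of-ample-divisor MMP. The Mori fiber case is more delicate: extracting a boundary on a general fibre whose coefficients still lie in a DCC set typically requires Ambro's canonical bundle formula or a subadjunction argument, and this is where I expect the bulk of the technical work to lie.
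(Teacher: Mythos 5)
You have written a proof sketch for a statement the paper does not actually prove. Theorem \ref{thmacclct} is quoted directly from Hacon--M\textsuperscript{c}Kernan--Xu \cite{hmx-acc}; it is listed in the preliminaries section of the present paper as an imported tool, with no argument supplied. There is therefore no ``paper's own proof'' to compare against, and the citation ``cf.~[Theorem 1.1]{hmx-acc}'' should have signalled this.

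On the merits of your sketch: it does capture the broad shape of the actual HMX argument (induction on dimension, contradiction via a monotone sequence of thresholds, passage to a model where a divisor computing the threshold appears with coefficient one, divisorial adjunction to drop the dimension, and DCC control of the adjoint coefficients). However, two points deserve flagging. First, the real HMX proof is organised as a \emph{mutual} induction between the local ACC for thresholds and the global ACC (the statement recorded here as Theorem \ref{thmglobalacc}), and your sketch treats the local statement in isolation; the Mori fibre case you worry about at the end is precisely where the global ACC on the general fibre is invoked, so acknowledging that interdependence is essential rather than optional. Second, your appeal to Theorem \ref{thmtermi} (Birkar's termination with scaling) presupposes the existence of log minimal models, which in HMX is itself supplied by a separate and delicate argument, not an off-the-shelf tool; and the ``short matching'' that is supposed to transfer ACC from the restricted thresholds back to $\{t_i\}$ is the technical heart of the adjunction step and cannot be waved through. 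None of this is a criticism of the strategy --- it is the right strategy --- but it would not be honest to call the sketch a proof.

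For the purposes of reviewing the present paper, the correct observation is simply that Theorem \ref{thmacclct} is a black box borrowed from \cite{hmx-acc}, and no independent verification is attempted or needed here.
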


\begin{thm}[{cf.~~\cite[Theorem D]{hmx-acc}}]\label{thmglobalacc}
Fix a positive integer $n$ and a set $I \subset [0,1]$, which satisfies the DCC. 

Then there is a finite set $I_{0}\subset I$ with the following property:

If $(X,\Delta)$ is an lc pair such that 
\begin{enumerate}
\item[(i)]
X is projective of dimension $n$, 
\item[(ii)]
the coefficients of $\Delta$ belong to $I$, and
\item[(iii)]
$K_{X}+\Delta$ is numerically trivial,
\end{enumerate}
then the coefficients of $\Delta$ belong to $I_{0}$. 
\end{thm}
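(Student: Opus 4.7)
The plan is to argue by contradiction and induction on $n$, with the ACC for log canonical thresholds (Theorem \ref{thmacclct}) and divisorial adjunction as the two main engines.

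Suppose no finite set $I_0$ as in the conclusion exists. A standard diagonal extraction yields a sequence of projective lc pairs $(X_i,\Delta_i)$ of dimension $n$ with coefficients in $I$ and $K_{X_i}+\Delta_i\equiv 0$, together with prime components $B_i$ of $\Delta_i$ whose coefficients $b_i\in I$ are strictly increasing with some limit $b\in(0,1]$. Passing to a $\mathbb{Q}$-factorial dlt modification preserves numerical triviality of $K+\Delta$ and keeps coefficients in the DCC set $I\cup\{1\}$, so I may assume $(X_i,\Delta_i)$ is dlt. The base case $n=1$ is immediate: $X_i\cong\mathbb{P}^1$ forces $\deg\Delta_i=2$, and a DCC subset of $[0,1]$ cannot contain a strictly increasing sequence with bounded partial sums.

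For the inductive step I split on whether $\lfloor\Delta_i\rfloor$ vanishes. If $\lfloor\Delta_i\rfloor\neq 0$ for infinitely many $i$, choose a component $S_i\subset\lfloor\Delta_i\rfloor$ and apply divisorial adjunction $(K_{X_i}+\Delta_i)|_{S_i}=K_{S_i}+\Delta_{S_i}$. Shokurov's analysis of the different shows that the coefficients of $\Delta_{S_i}$ form a DCC set $D(I)$ depending only on $I$, and $K_{S_i}+\Delta_{S_i}\equiv 0$. By the inductive hypothesis in dimension $n-1$, the coefficients of $\Delta_{S_i}$ lie in a finite set, which in turn forces any coefficient $b_i$ on a component of $\Delta_i$ meeting $S_i$ into a finite set. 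Koll\'ar--Shokurov connectedness of lc centres and iteration over the strata of $\lfloor\Delta_i\rfloor$ take care of components of $\Delta_i$ disjoint from the chosen $S_i$.

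The remaining case is klt, $\lfloor\Delta_i\rfloor=0$, where the strategy is to inflate the coefficient of $B_i$ until an lc centre appears. Set
$$t_i:=\mathrm{lct}\bigl(X_i,\,\Delta_i-b_iB_i\,;\,B_i\bigr)\geq b_i.$$
By Theorem \ref{thmacclct} applied with $J=\{1\}$, $\{t_i\}$ satisfies ACC. If $t_i=b_i$ for infinitely many $i$, then $\{t_i\}$ contains the strictly increasing sequence $\{b_i\}$, contradicting ACC. Otherwise $t_i>b_i$ for all large $i$, so $(X_i,\Delta_i+(t_i-b_i)B_i)$ is lc but not klt and satisfies
$$K_{X_i}+\Delta_i+(t_i-b_i)B_i\;\equiv\;(t_i-b_i)B_i,$$
which is effective. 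A dlt modification of this new pair produces a nontrivial floor, and one absorbs the residual $(t_i-b_i)B_i$ by a further lct computation to recover a numerically trivial pair with a nontrivial floor, which reduces to the first case.

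The hardest step, and the main technical obstacle, is precisely this reduction in the klt case: maintaining exact numerical triviality after the perturbation so that adjunction really delivers a numerically trivial lower-dimensional pair, while simultaneously ensuring the resulting boundary coefficients still lie in a DCC set. Controlling the interplay between ACC for lct and the Shokurov different---in particular how the denominators arising in the different combine with coefficients from $I$ to produce a DCC coefficient set on $S_i$---is the technical heart of the argument, and the bookkeeping here is what makes the full global ACC genuinely deeper than its log canonical threshold counterpart.
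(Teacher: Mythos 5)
First, a point of calibration: the paper does not prove this statement at all --- Theorem \ref{thmglobalacc} is imported from \cite[Theorem D]{hmx-acc} and used as a black box, so your proposal has to be measured against the original Hacon--M\textsuperscript{c}Kernan--Xu argument, not against anything in this paper. Attempting to reprove it from scratch is a much bigger task than the citation suggests, and your sketch does not close it.

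There are two genuine gaps. The decisive one is the klt case, and you flag it yourself: after raising the coefficient of $B_i$ to the threshold $t_i$, the perturbed pair satisfies $K_{X_i}+\Delta_i+(t_i-b_i)B_i\equiv(t_i-b_i)B_i$, and a nonzero effective divisor on a projective variety is never numerically trivial; hence hypothesis (iii) fails for the new pair, and neither adjunction to the newly created lc centre nor the inductive hypothesis in dimension $n-1$ applies to it. ``Absorbing the residual by a further lct computation'' is not an argument --- there is no reason a further coefficient perturbation restores numerical triviality --- and this is exactly where the real content of \cite[Theorem D]{hmx-acc} sits: HMX run an MMP on the perturbed pair, use (special) termination and adjunction along the resulting lc centres or Mori fibre spaces, and combine ACC for log canonical thresholds in dimension $n$ with the global statement in dimension $n-1$ in an intertwined induction; none of that machinery appears in your sketch. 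The second gap is the appeal to Koll\'ar--Shokurov connectedness in the case $\lfloor\Delta_i\rfloor\neq0$: connectedness of the non-klt locus needs $-(K_X+\Delta)$ nef and \emph{big}, and it simply fails for numerically trivial pairs --- already $\bigl(\mathbb{P}^1,\{0\}+\{\infty\}\bigr)$ has disconnected non-klt locus, and taking products gives higher-dimensional dlt examples. So if the components $B_i$ carrying the strictly increasing coefficients are disjoint from every stratum of $\lfloor\Delta_i\rfloor$, your adjunction step never sees $b_i$, and this situation must be routed back through the (unproved) klt-type perturbation argument. The base case, the dlt reduction, and the ACC contradiction when $t_i=b_i$ are fine, but as written the proposal only carries out the easy reductions of the theorem.
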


\section{Proof of Theorem \ref{thmmain}}\label{sec3}

In this section we prove Theorem \ref{thmmain}. 

\begin{lem}\label{lemdeform}
Let $(X,B)$ be a projective lc pair. 
Let $\pi\!:\!(X,B)\to Z$ be a contraction to a normal projective variety $Z$ such that $K_{X}+B\sim_{\mathbb{R}}\pi^{*}D$ for some $D$ on $Z$. 
Then we can construct the following diagram  
$$
\xymatrix{
(X,B)\ar@{-->}[r] \ar[d]_{\pi}&(X_{0},B_{0})\ar^{\pi_{0}}[d]\\ 
Z&Z_{0}\ar^{h}[l]
}
$$
such that
\begin{itemize}\item$\pi_{0}$ and $h$ are contractions and $h$ is birational, \item$(X_{0},B_{0})$ is a log birational model of $(X,B)$ and it is a projective $\mathbb{Q}$-factorial lc pair such that $(X_{0},0)$ is klt,  \item$K_{X_{0}}+B_{0}\sim_{\mathbb{R}}\pi_{0}^{*}h^{*}D$, \item$Z_{0}$ is a projective $\mathbb{Q}$-factorial variety such that $(Z_{0},0)$ is klt, and\item$B_{0}=B'_{0}+B''_{0}$ with $B'_{0}\geq0$ and $B''_{0}\geq0$ such that $B''_{0}\sim_{\mathbb{R},\,Z_{0}}0$ and any lc center of $(X_{0},B'_{0})$ dominates $Z_{0}$. \end{itemize} 
\end{lem}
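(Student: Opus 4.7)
The plan is to combine a resolution of the base $Z$, a log resolution of $(X,B)$ compatible with that resolved base, and a relative MMP over $Z_{0}$ to produce the desired model together with the required splitting.

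First, I would take any resolution $h\!:\!Z_{0}\to Z$ so that $Z_{0}$ is smooth projective; then $Z_{0}$ is automatically $\mathbb{Q}$-factorial and $(Z_{0},0)$ is klt, which already secures the fourth bullet. Next I would take a log resolution $\tilde{f}\!:\!\tilde{X}\to X$ of $(X,B)$ chosen so that the rational map $\tilde{X}\dashrightarrow Z_{0}$ extends to a genuine morphism $\tilde{\pi}\!:\!\tilde{X}\to Z_{0}$; this can always be arranged by further blowing up the indeterminacy. Setting $\tilde{B}:=\tilde{f}^{-1}_{*}B+\sum_{j}E_{j}$, where $E_{j}$ runs over the $\tilde{f}$-exceptional prime divisors, Definition~\ref{deflogsm} realizes $(\tilde{X},\tilde{B})$ as a log smooth model of $(X,B)$, hence a $\mathbb{Q}$-factorial log birational model with $(\tilde{X},0)$ klt, and
$$K_{\tilde{X}}+\tilde{B}=\tilde{f}^{*}(K_{X}+B)+\tilde{F}\sim_{\mathbb{R}}\tilde{\pi}^{*}h^{*}D+\tilde{F}$$
for some effective $\tilde{f}$-exceptional divisor $\tilde{F}$.

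Second, I would split $\tilde{B}$ using its vertical structure over $Z_{0}$. For each prime divisor $P\subset Z_{0}$ with $\tilde{\pi}^{*}P=\sum_{i}m_{P,i}V_{P,i}$, write $b_{P,i}$ for the coefficient of $V_{P,i}$ in $\tilde{B}$ and set $c_{P}:=\min_{i}\bigl(b_{P,i}/m_{P,i}\bigr)$. Then
$$\tilde{B}'':=\sum_{P}c_{P}\tilde{\pi}^{*}P\quad\text{and}\quad\tilde{B}':=\tilde{B}-\tilde{B}''$$
give $\tilde{B}''\geq0$ vertical and $\sim_{\mathbb{R},\,Z_{0}}0$ (being a pullback from $Z_{0}$) and $\tilde{B}'\geq0$. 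By the maximality of each $c_{P}$, no full reduced vertical fibre is contained in $\lfloor\tilde{B}'\rfloor$; after additional blowups to separate coefficient-one horizontal components of $\tilde{B}'$ that meet along vertical loci, every lc center of $(\tilde{X},\tilde{B}')$ dominates $Z_{0}$.

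Third, I would run the $(K_{\tilde{X}}+\tilde{B})$-MMP over $Z_{0}$ with scaling of an ample divisor. Because $K_{\tilde{X}}+\tilde{B}\sim_{\mathbb{R},\,Z_{0}}\tilde{F}$ with $\tilde{F}\geq0$ exceptional over $X$, a log minimal model over $Z_{0}$ exists (the support of $\tilde{F}$ can be contracted in stages using the birational morphism to $X$), so Theorem~\ref{thmtermi} guarantees termination. The output $\phi\!:\!\tilde{X}\dashrightarrow X_{0}$ with $(X_{0},B_{0}=\phi_{*}\tilde{B})$, $B_{0}':=\phi_{*}\tilde{B}'$, $B_{0}'':=\phi_{*}\tilde{B}''$, and induced contraction $\pi_{0}\!:\!X_{0}\to Z_{0}$ satisfies all of the required properties: in particular $K_{X_{0}}+B_{0}\sim_{\mathbb{R}}\pi_{0}^{*}h^{*}D$ because the MMP contracts the entire support of $\tilde{F}$, and $B_{0}''$ remains $\sim_{\mathbb{R},\,Z_{0}}0$ because each MMP step is an isomorphism in codimension one over every codimension-one subvariety of $Z_{0}$.

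The main obstacle I expect is that the naive splitting of $\tilde{B}$ can fail to isolate every coefficient-one vertical prime in $\tilde{B}''$ whenever some vertical prime over $P$ has $\tilde{B}$-coefficient zero (then $c_{P}=0$), leaving behind vertical lc places in $\tilde{B}'$. Correcting this forces a preliminary MMP step over $Z_{0}$ to contract or flip those vertical coefficient-one divisors which are not supported on full fibres; once that is done the splitting works, but coordinating the splitting with the MMP so that $\phi_{*}\tilde{B}''$ really stays a $\sim_{\mathbb{R},\,Z_{0}}0$ divisor and no new vertical lc centers are created on $(X_{0},B_{0}')$ is the most delicate part of the argument.
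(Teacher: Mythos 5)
Your construction has a genuine gap, and the gap is precisely where the paper's argument does its real work.

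The vertical splitting you propose does not achieve the lc-center condition. Setting $c_{P}=\min_{i}(b_{P,i}/m_{P,i})$ gives $c_{P}=0$ as soon as a single vertical prime over $P$ has $\tilde{B}$-coefficient $0$, and then every coefficient-one vertical component over $P$ survives in $\lfloor\tilde{B}'\rfloor$; moreover, even when $c_{P}>0$, horizontal coefficient-one components of $\tilde{B}'$ can intersect along vertical strata, producing vertical lc centers that your ``additional blowups to separate'' do not and cannot remove (blowing up a vertical lc center typically creates a new exceptional divisor of discrepancy $-1$ that is again vertical). You acknowledge part of this and suggest a ``preliminary MMP over $Z_{0}$'' to contract the offending divisors, but you give no reason why such an MMP would terminate, why it would contract precisely these divisors, or why it would not introduce new vertical lc centers. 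A second, more local problem: your justification for the existence of a log minimal model of $(\tilde{X},\tilde{B})$ over $Z_{0}$ (``the support of $\tilde{F}$ can be contracted in stages using the birational morphism to $X$'') is circular, because $\tilde{f}:\tilde{X}\to X$ is \emph{not} a morphism over $Z_{0}$; the correct tool is Birkar's MMP for very exceptional divisors (\cite[Theorem 3.5]{birkar-flip}), which is exactly what the paper invokes.

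The paper's route is structurally different and is worth internalizing. It does \emph{not} start by resolving $Z$. Instead it first takes a dlt blow-up $(W,\Psi)\to(X,B)$ \emph{together with} a decomposition $\Psi=\Psi'+\Psi''$ in which $\Psi''\geq0$ is vertical and \emph{all} lc centers of $(W,\Psi')$ dominate $Z$ --- this decomposition is the content of \cite[Corollary 2.14]{has-trivial} and is not something a naive coefficient-minimum split can reproduce. It then runs the $(K_{W}+\Psi')$-MMP over $Z$ (note: with the \emph{horizontal} part $\Psi'$, not the full $\Psi$) to a good minimal model $(\overline{X},\overline{B}')$ and takes the relative Iitaka fibration $\overline{Z}\to Z$; since $K_{W}+\Psi'\sim_{\mathbb{R},Z}-\Psi''$ has relative numerical dimension $0$, this $\overline{Z}\to Z$ is birational, and then $\overline{B}''=\overline{B}-\overline{B}'$ satisfies $\overline{B}''\sim_{\mathbb{R},\overline{Z}}0$ by subtracting $K_{\overline{X}}+\overline{B}'\sim_{\mathbb{R},\overline{Z}}0$ from $K_{\overline{X}}+\overline{B}\sim_{\mathbb{R},\overline{Z}}0$. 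Passing to the Iitaka fibration is exactly what promotes ``vertical'' to ``$\mathbb{R}$-linearly trivial over the base,'' and simultaneously preserves the lc-center condition. Only afterwards, in a second step, does the paper fix the base to be $\mathbb{Q}$-factorial with klt singularities, using the canonical bundle formula \cite[Corollary 3.2]{fg-bundle} to produce a klt pair on $\overline{Z}$ and a dlt blow-up of it --- not an arbitrary resolution, but a small $\mathbb{Q}$-factorialization tailored to that klt pair. Your plan to take an arbitrary smooth $Z_{0}$ up front therefore misidentifies where the base model comes from: it is produced \emph{by} the horizontal MMP, not chosen in advance.
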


\begin{proof}
The idea of the proof can be found in \cite[proof of Lemma 4.5]{has-trivial}. 
We prove Lemma \ref{lemdeform} with three steps. 
\begin{step}\label{step1}
In this step we construct a diagram 
$$
\xymatrix{
(X,B)\ar_{\pi}[d] \ar@{-->}[r]&(\overline{X},\overline{B})\ar^{\overline{\pi}}[d]\\ 
Z&\overline{Z} \ar^{\overline{h}}[l]
}
$$
such that 
\begin{enumerate}
\item
$\overline{\pi}$ and $\overline{h}$ are contractions and $\overline{h}$ is birational, 
\item
$(\overline{X},\overline{B})$ is a log birational model of $(X,B)$ and it is a projective $\mathbb{Q}$-factorial lc pair such that $(\overline{X},0)$ is klt,  
\item 
$\overline{B}=\overline{B}'+\overline{B}''$ with $\overline{B}'\geq0$ and $\overline{B}''\geq0$ such that $\overline{B}''\sim_{\mathbb{R},\,\overline{Z}}0$ and any lc center of $(\overline{X},\overline{B}')$ dominates $\overline{Z}$, and 
\item 
$K_{\overline{X}}+\overline{B}\sim_{\mathbb{R}}\overline{\pi}^{*}\overline{h}^{*} D$. 
\end{enumerate}

First take a dlt blow-up $(W,\Psi)\to (X,B)$ as in \cite[Corollary 2.14]{has-trivial}. 
Then we can  decompose $\Psi=\Psi'+\Psi''$ with $\Psi'\geq0$ and $\Psi''\geq 0$ such that $\Psi''$ is vertical over $Z$  and any lc center of $(W,\Psi')$ dominates $Z$. 
Moreover we have $K_{W}+\Psi'+\Psi''\sim_{\mathbb{R},\,Z}0$. 
Since $(W,\Psi')$ is $\mathbb{Q}$-factorial and dlt, by \cite[Theorem 1.1]{has-mmp}, we can run the $(K_{W}+\Psi')$-MMP over $Z$ with scaling and get a good minimal model $(W,\Psi')\dashrightarrow (\overline{X},\overline{B}')$ over $Z$. 
Let $\overline{B}$ and $\overline{B}''$ be the birational transform of $\Psi$ and $\Psi''$ on $\overline{X}$ respectively. 
Then $\overline{B}=\overline{B}'+\overline{B}''$. 
Let $\overline{\pi}\!:\!\overline{X}\to \overline{Z}$ be the contraction over $Z$ induced by $K_{\overline{X}}+\overline{B}'$, and let $\overline{h}\!:\!\overline{Z}\to Z$ be the induced morphism. 

We can easily check that $(\overline{X},\overline{B}=\overline{B}'+\overline{B}'')$, $\overline{\pi}\!:\!\overline{X}\to \overline{Z}$ and $\overline{h}\!:\!\overline{Z}\to Z$ satisfy conditions (1), (2), (3) and (4). 
Indeed, it is easy to see that $\overline{\pi}$ and $\overline{h}$ satisfy condition (1). 
We also have $K_{\overline{X}}+\overline{B}\sim_{\mathbb{R}}\overline{\pi}^{*}\overline{h}^{*}D$, which is condition (4).   
Moreover, since $(X,B)$ is lc and since $K_{X}+B$ and $K_{\overline{X}}+\overline{B}$ are both $\mathbb{R}$-linearly equivalent to the pullback of $D$, we see that $(\overline{X},\overline{B})$ is lc. 
Now it is clear that $(\overline{X},\overline{B})$ satisfies condition (2). 
It is also clear that $\overline{B}''\sim_{\mathbb{R},\,\overline{Z}}0$ because  $K_{\overline{X}}+\overline{B}'\sim_{\mathbb{R},\,\overline{Z}}0$.
Finally we check that any lc center of $(\overline{X},\overline{B}')$ dominates $\overline{Z}$.
Pick any prime divisor $P$ over $\overline{X}$ such that $a(P,\overline{X},\overline{B}')=-1$.  
Then $a(P,W,\Psi')=-1$ and thus $P$ dominates $Z$. 
Since $\overline{h}\!:\!\overline{Z}\to Z$ is birational, we see that $P$ dominates $\overline{Z}$. 
Therefore any lc center of $(\overline{X},\overline{B}')$ dominates $\overline{Z}$, and we see that $(\overline{X},\overline{B}=\overline{B}'+\overline{B}'')$ satisfies condition (3). 
So we complete this step.
\end{step} 

\begin{step}\label{step2}
We put $\overline{D}=\overline{h}^{*}\!D$. 
Then $K_{\overline{X}}+\overline{B}\sim_{\mathbb{R}}\overline{\pi}^{*}\overline{D}$ by construction. 
In this step we construct a diagram 
$$
\xymatrix{
(\overline{X},\overline{B})\ar_{\overline{\pi}}[d] \ar@{-->}[r]&(X_{0},B_{0})\ar^{\pi_{0}}[d]\\ 
\overline{Z}&Z_{0} \ar^{h_{0}}[l]
}
$$
with a projective $\mathbb{Q}$-factorial variety $Z_{0}$ such that  $(Z_{0},0)$ is klt and
\begin{enumerate}
\item[(1$'$)]
$\pi_{0}$ and $h_{0}$ are contractions and $h_{0}$ is birational, 
\item[(2$'$)]
$(X_{0},B_{0})$ is a log birational model of $(\overline{X},\overline{B})$ and it is a projective $\mathbb{Q}$-factorial lc pair such that $(X_{0},0)$ is klt, 
\item[(3$'$)]
$B_{0}=B_{0}'+B_{0}''$ with $B_{0}'\geq0$ and $B_{0}''\geq0$ such that $B''_{0}\sim_{\mathbb{R},\,Z_{0}}0$ and any lc center of $(X_{0},B'_{0})$ dominates $Z_{0}$, and 
\item[(4$'$)] 
$K_{X_{0}}+B_{0}\sim_{\mathbb{R}} \pi_{0}^{*}h_{0}^{*}\overline{D}$. 
\end{enumerate}

By condition (3) in Step \ref{step1}, there exists an $\mathbb{R}$-divisor $\overline{T}\geq0$ on $\overline{Z}$ such that $\overline{B}''\sim_{\mathbb{R}}\overline{\pi}^{*}\overline{T}$.  
By condition (3) in Step \ref{step1} and \cite[Corollary 3.2]{fg-bundle}, there exists a klt pair on $\overline{Z}$. 
Let $h_{0}\!:\!Z_{0}\to \overline{Z}$ be a dlt blow-up of the klt pair. 
Then $h_{0}$ is a small birational morphism and $Z_{0}$ is $\mathbb{Q}$-factorial. 
Let $\overline{\varphi}\!:\!\overline{W}\to \overline{X}$ be a log resolution of $(\overline{X},\overline{B}')$ such that the induced map $\pi_{\overline{W}}\!:\!\overline{W}\dashrightarrow Z_{0}$ is a morphism. 
We pick a boundary divisor $\Psi'_{\overline{W}}$ so that $(\overline{W},\Psi'_{\overline{W}})$ is a log smooth model of $(\overline{X},\overline{B}')$. 
Then we have
\begin{equation*}
\begin{split}
K_{\overline{W}}+\Psi'_{\overline{W}}=&\overline{\varphi}^{*}(K_{\overline{X}}+\overline{B}')+E_{\overline{W}}\sim_{\mathbb{R}}\overline{\varphi}^{*}\overline{\pi}^{*}(\overline{D}-\overline{T})+E_{\overline{W}}\\
=&(h_{0}\circ\pi_{\overline{W}})^{*}(\overline{D}-\overline{T})+E_{\overline{W}}
\end{split}
\end{equation*}
for a $\overline{\varphi}$-exceptional divisor $E_{\overline{W}}\geq0$. 
By construction of $\Psi'_{\overline{W}}$, for any $\overline{\varphi}$-exceptional prime divisor $E_{i}$ on $\overline{W}$, $E_{i}$ is a component of $E_{\overline{W}}$ if and only if $a(E_{i},\overline{X},\overline{B}')>-1$. 

We run the $(K_{\overline{W}}+\Psi'_{\overline{W}})$-MMP over $Z_{0}$ with scaling. 
By the argument of very exceptional divisors (cf.~\cite[Theorem 3.5]{birkar-flip}), after finitely many steps, $E_{\overline{W}}$ is contracted and thus we get a model $(\overline{W},\Psi'_{\overline{W}})\dashrightarrow(X_{0},B'_{0})$ such that 
$K_{X_{0}}+B_{0}'\sim_{\mathbb{R},\,Z_{0}}0$. 
Let $\pi_{0}\!:\!X_{0}\to Z_{0}$ be the induced morphism. 
Now we have the following diagram. 
$$ \xymatrix{ (\overline{X},\overline{B}')\ar_{\overline{\pi}}[d]&(\overline{W},\Psi'_{\overline{W}})\ar_{\overline{\varphi}}[l] \ar_{\pi_{\overline{W}}}[dr]\ar@{-->}[r]&(X_{0},B'_{0})\ar^{\pi_{0}}[d]\\ \overline{Z}&&Z_{0} \ar^{h_{0}}[ll]}$$
Moreover we have $K_{X_{0}}+B'_{0}\sim_{\mathbb{R}}\pi_{0}^{*}h_{0}^{*}(\overline{D}-\overline{T})$. 
Let $B''_{0}$ be the birational transform of $\overline{\varphi}^{*}\overline{B}''$ on $X_{0}$, and we put $B_{0}=B'_{0}+B''_{0}$. 
Recall that the divisor $\overline{T}$ on $\overline{Z}$ satisfies $\overline{B}''\sim_{\mathbb{R}}\overline{\pi}^{*}\overline{T}$.  

From now on we check that  $(X_{0}, B_{0}=B'_{0}+B''_{0})$, $\pi_{0}\!:\!X_{0}\to Z_{0}$ and $h_{0}\!:\!Z_{0}\to \overline{Z}$ satisfy conditions (1$'$), (2$'$), (3$'$) and (4$'$). 
It is clear that $\pi_{0}$ and $h_{0}$ satisfy condition (1$'$). 
Moreover, since $B''_{0}\sim_{\mathbb{R}}\pi_{0}^{*}h_{0}^{*}\overline{T}$, we have
$$
K_{X_{0}}+B_{0}=K_{X_{0}}+B'_{0}+B''_{0}\sim_{\mathbb{R}}\pi_{0}^{*}h_{0}^{*}(\overline{D}-\overline{T})+\pi_{0}^{*}h_{0}^{*}\overline{T}=\pi_{0}^{*}h_{0}^{*}\overline{D}.
$$
Therefore $K_{X_{0}}+B_{0}$ satisfies condition (4$'$). 
Next pick any prime divisor $P$ over $X_{0}$ such that $a(P,X_{0},B'_{0})=-1$. 
Then $a(P,\overline{W},\Psi'_{\overline{W}})=-1$, and hence $a(P,\overline{X},\overline{B}')=-1$ because $(\overline{W},\Psi'_{\overline{W}})$ is a log smooth model of $(\overline{X},\overline{B}')$ (cf.~\cite[Remark 2.11]{has-mmp}). 
So $P$ dominates $\overline{Z}$ by condition (3) in Step \ref{step1}. 
Since $h_{0}\!:\!Z_{0}\to \overline{Z}$ is birational, $P$ dominates $Z_{0}$ and hence we see that any lc center of $(X_{0},B'_{0})$ dominates $Z_{0}$. 
Now we can easily check that $(X_{0}, B_{0}=B'_{0}+B''_{0})$ satisfies condition (3$'$). 
Finally we check condition (2$'$). 
We only check that $(X_{0},B_{0})$ is a log birational model of $(\overline{X},\overline{B})$ because others are easy. 
Note that $(X_{0},B_{0})$ is lc since $(\overline{X},\overline{B})$ is lc and since $K_{\overline{X}}+\overline{B}$ and $K_{X_{0}}+B_{0}$ are both $\mathbb{R}$-linearly equivalent to the pullback of $\overline{D}$. 
Let $E_{i}$ be a $\overline{\varphi}$-exceptional prime divisor on $\overline{W}$ such that $a(E_{i},\overline{X},\overline{B})>-1$. 
We show that $E_{i}$ is contracted by $\overline{W}\dashrightarrow X_{0}$. 
Since $a(E_{i},\overline{X},\overline{B}')\geq a(E_{i},\overline{X},\overline{B})>-1$ we see that $E_{i}$ is a component of $E_{\overline{W}}$. 
Then $E_{i}$ is contracted by $\overline{W}\dashrightarrow X_{0}$ since $E_{\overline{W}}$ is contracted by $\overline{W}\dashrightarrow X_{0}$. 
In this way we see that $(X_{0}, B_{0})$ is a log birational model of $(\overline{X},\overline{B})$. 
So $(X_{0},B_{0})$ satisfies condition (2$'$) and we complete this step. 
\end{step}

\begin{step}\label{step3}
Now we have constructed the following diagram 
$$
\xymatrix{
(X,B)\ar_{\pi}[d] \ar@{-->}[r]&(\overline{X},\overline{B})\ar_{\overline{\pi}}[d] \ar@{-->}[r]&(X_{0},B_{0})\ar^{\pi_{0}}[d]\\ 
Z_{Y}&\overline{Z} \ar^{\overline{h}}[l]&Z_{0} \ar^{h_{0}}[l]
}
$$
satisfying conditions (1), (2), (3) and (4) in Step \ref{step1} and (1$'$), (2$'$), (3$'$) and (4$'$) in Step \ref{step2}, and furthermore $Z_{0}$ is $\mathbb{Q}$-factorial and $(Z,0)$ is klt. 
We set $h=\overline{h}\circ h_{0}:Z_{0}\to Z$. 
By construction $h$ is birational, and it is clear that the following 
$$
\xymatrix{
(X,B)\ar_{\pi}[d] \ar@{-->}[r]&(X_{0},B_{0}=B'_{0}+B''_{0})\ar^{\pi_{0}}[d]\\ 
Z&Z_{0} \ar^{h}[l]
}
$$
is the desired diagram. 
So we are done. 
\end{step}
\end{proof}


\begin{rem}
By construction of the diagram we see that the divisor $B''_{0}$ is reduced, i.e., all coefficients of $B''_{0}$ are one (cf.~\cite[Lemma 4.5]{has-trivial}). 
But we do not use this fact in this paper.  
\end{rem}

\begin{lem}\label{lemlift}
Let $\pi\!:\!(X,B)\to Z$ be a contraction such that 
\begin{itemize}
\item
$(X,B)$ is a projective $\mathbb{Q}$-factorial lc pair such that $(X,0)$ is klt, 
\item 
$K_{X}+B\sim_{\mathbb{R}}\pi^{*}D$ for some $D$ on $Z$,
\item
$Z$ is a projective $\mathbb{Q}$-factorial variety such that $(Z,0)$ is klt, and
\item
$B=B'+B''$ with $B'\geq0$ and $B''\geq0$ such that $B''\sim_{\mathbb{R},\,Z}0$ and any lc center of $(X,B')$ dominates $Z$. 
\end{itemize}
Let $T$ be an effective $\mathbb{R}$-divisor on $Z$ such that $B''\sim_{\mathbb{R}}\pi^{*}T$. 
If $D$ is pseudo-effective but $D-eT$ is not pseudo-effective for any $e>0$, then we can construct the following diagram 
$$
\xymatrix{
(X,B)\ar@{-->}[r]\ar_{\pi}[d]&(\widetilde{X},\widetilde{B})\ar^{\widetilde{\pi}}[d]\\ 
Z\ar@{-->}[r]&\widetilde{Z}\ar[r]&Z^{\vee}
}
$$
such that 
\begin{itemize}
\item
$(\widetilde{X},\widetilde{B})$ is projective  $\mathbb{Q}$-factorial lc, $(X,0)$ is klt,   $\widetilde{Z}$ is projective and $\mathbb{Q}$-factorial, $(\widetilde{Z},0)$ is klt, and $Z^{\vee}$ is normal and projective,
\item
the maps $X\dashrightarrow \widetilde{X}$ and $Z\dashrightarrow\widetilde{Z}$ are birational contractions, 
\item
the morphism $\widetilde{Z}\to Z^{\vee}$ is a contraction such that  $\rho(\widetilde{Z}/Z^{\vee})=1$ and ${\rm dim}\,Z^{\vee}<{\rm dim}\,\widetilde{Z}$, and  
\item
$K_{\widetilde{X}}+\widetilde{B}\sim_{\mathbb{R}}\widetilde{\pi}^{*}\widetilde{D}$ and  $\widetilde{D}\sim_{\mathbb{R},\,Z^{\vee}}0$. 
\end{itemize}
Here the divisors $\widetilde{B}$ and $\widetilde{D}$ are the birational transform of $B$ on $\widetilde{X}$ and $D$ on $\widetilde{Z}$ respectively. 
\end{lem}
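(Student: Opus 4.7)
The plan is to run a $(K_X+B')$-MMP on $X$ with scaling of $B''$ and descend its output to $Z$. The critical value of the scaling parameter will turn out to be exactly $\lambda=1$, which forces termination at a Mori fiber contraction on which $K_X+B$ is trivial on the contracted rays; this will in turn descend to the desired Mori fiber contraction $\widetilde Z\to Z^\vee$.

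The starting point is the identity
\[
K_X+B'+\lambda B''\;\sim_{\mathbb R}\;\pi^{*}\bigl(D-(1-\lambda)T\bigr),
\]
from which the hypothesis that $D-eT$ is not pseudo-effective for any $e>0$ gives: $K_X+B'+\lambda B''$ is pseudo-effective if and only if $\lambda\geq 1$. In particular $K_X+B'$ is not pseudo-effective while $K_X+B$ is. After a preliminary reduction (either a relative MMP for $(X,B)$ over $Z$ or pairing with an auxiliary ample divisor) making $K_X+B$ nef, I would initiate the $(K_X+B')$-MMP with scaling of $B''$ at $\lambda_0=1$. Since $(X,0)$ is klt and $(X,B')$ is lc, a small perturbation places us in a klt setting where Theorem~\ref{thmtermi} and BCHM-style termination results apply.

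The scaling parameter cannot drop below $1$, because $K_X+B'+\lambda B''$ fails to be pseudo-effective for any $\lambda<1$. Hence every extremal ray $R$ contracted by the MMP satisfies $R\cdot(K_X+B)=0$ and $R\cdot(K_X+B')<0$, so $R\cdot B''>0$, and since $B''\sim_{\mathbb R}\pi^{*}T$ we get $R\cdot\pi^{*}T>0$. Thus $R$ is not contained in any fiber of $\pi$, and each MMP step on $X$ induces a corresponding birational modification of $Z$, producing a compatible birational contraction $Z\dashrightarrow\widetilde Z$. Because $K_X+B'$ is not pseudo-effective the MMP cannot terminate at a minimal model, so it must end at a Mori fiber contraction $\widetilde X\to W$; the last extremal ray also satisfies $R\cdot\widetilde\pi^{*}\widetilde T>0$, so it is horizontal over $\widetilde Z$ and $\widetilde X\to W$ factors as $\widetilde X\xrightarrow{\widetilde\pi}\widetilde Z\to W=:Z^{\vee}$ with $\dim Z^{\vee}<\dim\widetilde Z$.

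A Picard-number comparison using $\rho(\widetilde X/W)=1$ and the injectivity of $\widetilde\pi^{*}$ on relative Néron–Severi groups yields $\rho(\widetilde Z/Z^{\vee})=1$, and the relation $R\cdot(K_X+B)=0$ at every step propagates to $R\cdot\widetilde\pi^{*}\widetilde D=0$ on the Mori fiber, whence $\widetilde D\equiv_{Z^{\vee}}0$ and then $\widetilde D\sim_{\mathbb R,Z^{\vee}}0$ by a relative base-point-free type argument. $\mathbb Q$-factoriality and klt-ness of $\widetilde X$ and $\widetilde Z$ propagate along the MMP by standard means. The hard part will be justifying the scaling MMP when the scaling divisor $B''$ is not ample: one must verify termination (through an appropriate klt perturbation and Theorem~\ref{thmtermi}), the preservation of $\mathbb Q$-factoriality and klt singularities throughout, and that each step descends cleanly to $Z$; the preliminary reduction making $K_X+B$ relatively nef so as to initiate the scaling at $\lambda_0=1$ may itself require a separate invocation of the MMP.
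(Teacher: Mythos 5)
Your approach is genuinely different from the paper's, and it contains a real gap at its crux. The paper does not run an MMP on $X$ at all: it uses the canonical bundle formula of Fujino--Gongyo to produce, for each member $e_{n}$ of a sequence tending to zero, a klt pair $(Z,\Theta_{n})$ on the base with $K_{Z}+\Theta_{n}\sim_{\mathbb{R}}D-e_{n}T$ not pseudo-effective. It runs the $(K_{Z}+\Theta_{n})$-MMP on $Z$ to reach a Mori fiber space $\widetilde{Z}_{n}\to Z_{n}^{\vee}$, and then \emph{lifts} each step of that MMP to a $(K_{X}+B-e_{n}B'')$-MMP on $X$ by the mechanism of \cite[Lemma 3.6]{has-trivial}. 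The delicate point, handled via the ACC theorems (Theorems \ref{thmacclct} and \ref{thmglobalacc}), is to show that for some fixed $n$ the pair $(\widetilde{X}_{n},\widetilde{B}_{n})$ is still lc and $K_{\widetilde{X}_{n}}+\widetilde{B}_{n}\sim_{\mathbb{R},Z_{n}^{\vee}}0$.

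Your plan inverts this: run an MMP on $X$ and try to push it down to $Z$. The missing idea is precisely the descent. You observe that every contracted extremal ray $R$ satisfies $R\cdot\pi^{*}T>0$, hence is horizontal over $Z$, and assert that this ``induces a corresponding birational modification of $Z$.'' But a contraction or flip of a horizontal ray on $X$ does not, in general, descend to any contraction of $\overline{NE}(Z)$: the image $\pi_{*}R$ need not span an extremal ray of $Z$, and flips, being small, leave $Z$ completely untouched by the map $\pi$. This is exactly why the lifting direction $Z\leadsto X$ is used in the source, not the other way around. Compounding this, your final factorization claim is backwards: you deduce from $R\cdot\widetilde{\pi}^{*}\widetilde{T}>0$ that $R$ is horizontal over $\widetilde{Z}$, yet for $\widetilde{X}\to W$ to factor as $\widetilde{X}\xrightarrow{\widetilde{\pi}}\widetilde{Z}\to W$ one needs the curves contracted by $\widetilde{\pi}$ (which are vertical, hence not in $R$) to be contracted by $\widetilde{X}\to W$; since $\rho(\widetilde{X}/W)=1$, only curves in $R$ are contracted, so the factorization fails rather than holds when $R$ is horizontal. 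Finally, the ``scaling stays at $\lambda=1$'' argument presupposes that $K_{X}+B$ is absolutely nef to start, that $B''$ is an admissible scaling divisor, and that the MMP terminates; none of these is established, and a klt perturbation to invoke Theorem \ref{thmtermi} would itself perturb the critical value away from $1$. These are not cosmetic difficulties: the approach of running on $Z$ and lifting, combined with the ACC limit argument over $e_{n}\to 0$, is what the paper uses precisely to sidestep every one of them.
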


\begin{proof}
We can construct the desired diagram by the same argument as in \cite[Step 1 and 2 in the proof of Proposition 5.3]{has-trivial}. 
We write down the details for the reader's convenience. 

Let $\{e_{n}\}_{n\geq1}$ be a strictly decreasing sequence of positive real numbers such that $e_{n}<1$ for any $n$ and ${\rm lim}_{n\to \infty} e_{n}=0$. 
By \cite[Corollary 3.2]{fg-bundle}, for any $n\geq1$, we can find a boundary $\mathbb{R}$-divisor $\Theta_{n}$ such that $(Z,\Theta_{n})$ is klt and 
$$K_{X}+B-e_{n}B''\sim_{\mathbb{R}}\pi^{*}(D-e_{n}T)\sim_{\mathbb{R}}\pi^{*}(K_{Z}+\Theta_{n}).$$ 
Since $K_{Z}+\Theta_{n}\sim_{\mathbb{R}}D-e_{n}T$ is not pseudo-effective for any $n\geq1$, we can run the $(K_{Z}+\Theta_{n})$-MMP with scaling and obtain a Mori fiber space. 
Let $Z\dashrightarrow \widetilde{Z}_{n}$ be the birational contraction of a finitely many steps of the $(K_{Z}+\Theta_{n})$-MMP, and let $\widetilde{Z}_{n}\to Z_{n}^{\vee}$ be the contraction of the Mori fiber space. 
Let $\widetilde{D}_{n}$ and $\widetilde{T}_{n}$ be the birational transform of $D$ and $T$ on $\widetilde{Z}_{n}$ respectively.
Since $K_{Z}+\Theta_{n}\sim_{\mathbb{R}}D-e_{n}T$ and since $D$ is pseudo-effective, we see that  $\widetilde{D}_{n}-e_{n}\widetilde{T}_{n}$ is anti-ample over $Z_{n}^{\vee}$ and $\widetilde{T}_{n}$ is ample over $Z_{n}^{\vee}$. 
Furthermore, by applying the $\mathbb{R}$-boundary divisor version of \cite[Lemma 3.6]{has-trivial}, we have the following diagram 
$$
\xymatrix{
(X,B-e_{n}B'')\ar@{-->}[rr] \ar_{\pi}[d]&&(\widetilde{X}_{n},\widetilde{B}_{n}-e_{n}\widetilde{B}''_{n})\ar^{\pi_{n}}[d]\\ 
Z\ar@{-->}[rr]&&\widetilde{Z}_{n}\ar[r]&Z_{n}^{\vee}
}
$$
such that the upper horizontal birational map is a finitely many steps of the $(K_{X}+B-e_{n}B'')$-MMP and 
$$K_{\widetilde{X}_{n}}+\widetilde{B}_{n}-e_{n}\widetilde{B}''_{n}\sim_{\mathbb{R}}\pi_{n}^{*}(\widetilde{D}_{n}
-e_{n}\widetilde{T}_{n})\quad {\rm and}\quad \widetilde{B}''_{n}\sim_{\mathbb{R}}\pi_{n}^{*}\widetilde{T}_{n},$$
where $\widetilde{B}_{n}$ and $\widetilde{B}''_{n}$ are the birational transform of $B$ and $B''$ on $\widetilde{X}_{n}$. 
Now we apply 
Theorem \ref{thmacclct} to $X_{n}$ and apply Theorem \ref{thmglobalacc} to the general fiber of $\widetilde{X}_{n}\to Z_{n}^{\vee}$.  
Then we see that for some $n$ the pair $(\widetilde{X}_{n},\widetilde{B}_{n})$ is lc and $K_{\widetilde{X}_{n}}+\widetilde{B}_{n}\sim_{\mathbb{R},\,Z_{n}^{\vee}}0$ (cf.~\cite[Step 2 in the proof of Proposition 5.3]{has-trivial}). 
We also see that $\widetilde{D}_{n}\sim_{\mathbb{R},\,Z_{n}^{\vee}}0$ because we have  $K_{\widetilde{X}_{n}}+\widetilde{B}_{n}\sim_{\mathbb{R}}\pi_{n}^{*}\widetilde{D}_{n}$. 
For this $n$ we put $\widetilde{Z}=\widetilde{Z}_{n}$ and $Z^{\vee}=Z_{n}^{\vee}$. 
Then it is easy to see that the following
$$
\xymatrix{
(X,B)\ar@{-->}[r]\ar_{\pi}[d]&(\widetilde{X},\widetilde{B})\ar[d]\\ 
Z\ar@{-->}[r]&\widetilde{Z}\ar[r]&Z^{\vee}
}
$$
is the desired diagram. 
\end{proof}


\begin{proof}[Proof of Theorem \ref{thmmain}]
By hypothesis there is $C$ on $X$ such that $(X,C)$ is lc and $K_{X}+C\equiv0$. 
Then we have $K_{X}+C\sim_{\mathbb{R}}0$ by the abundance theorem for numerically trivial lc pairs. 
Therefore we may assume $C\neq0$ and Theorem \ref{thmmain} for $(X,\Delta)$ is equivalent to Theorem \ref{thmmain} for $(X,t\Delta+(1-t)C)$ for any $0<t\ll1$. 
So we will freely replace $(X,\Delta)$ with $(X,t\Delta+(1-t)C)$. 

By taking a dlt blow-up of $(X,C)$ and by replacing $(X,\Delta)$ with $(X,t\Delta+(1-t)C)$ for some $0<t\ll1$ we can assume $X$ is $\mathbb{Q}$-factorial and $(X,0)$ is klt. 
Since $C\neq0$, $K_{X}$ is not pseudo-effective, and thus $\tau(X,0;\Delta)>0$. 
Replacing $(X,\Delta)$ by $(X,\tau(X,0;\Delta)\Delta)$, we can assume that $\tau(X,0;\Delta)=1$. 

We prove Theorem \ref{thmmain} by induction on the dimension of $X$.

\begin{step3}\label{step1non} 
By \cite[Lemma 3.1]{gongyo-nonvanishing}, we can construct a birational contraction $\phi\!:\!X\dashrightarrow X'$ and a contraction $X'\to Z'$ such that ${\rm dim}\,Z'<{\rm dim}\,X$, $(X',\phi_{*}\Delta)$ is lc and $K_{X'}+\phi_{*}\Delta\sim_{\mathbb{R},\,Z'}0$. 
Then $(X',\phi_{*}C)$ is also lc since $K_{X}+C\sim_{\mathbb{R}}0$. 
Take a log resolution $Y \to X$ of $(X,{\rm Supp}(\Delta+C))$ so that the induced map $f\!:\!Y\dashrightarrow X'$ is a morphism, and let $(Y,\Delta_{Y})$ and $(Y,C_{Y})$ be log smooth models of $(X,\Delta)$ and $(X,C)$ respectively. 

Since $K_{X}+C\sim_{\mathbb{R}}0$, we see that 
$K_{Y}+C_{Y}-f^{*}(K_{X'}+\phi_{*}C)$
is effective and $f$-exceptional. 
So we can run the $(K_{Y}+C_{Y})$-MMP over $X'$ and get a model $f'\!:\!(Y',C_{Y'})\to X'$ such that $K_{Y'}+C_{Y'}=f'^{*}(K_{X'}+\phi_{*}C)\sim_{\mathbb{R}}0.$ 
By construction $(Y',C_{Y'})$ is lc and $Y\dashrightarrow Y'$ is a finitely many steps of the $(K_{Y}+t\Delta_{Y}+(1-t)C_{Y})$-MMP for any $0<t\ll1$. 
Fix a sufficiently small $t>0$ and set $\Gamma_{Y}=t\Delta_{Y}+(1-t)C_{Y}$. 
Let $\Gamma_{Y'}$ be the birational transform of $\Gamma_{Y}$ on $Y'$.  
Then we can write
$$K_{Y'}+\Gamma_{Y'}=f'^{*}\bigl(K_{X'}+t\phi_{*}\Delta+(1-t)\phi_{*}C\bigr)+F$$
with an $f'$-exceptional divisor $F$. 
Note that $F$ may not be effective. 
Run the $(K_{Y'}+\Gamma_{Y'})$-MMP over $X'$ with scaling. 
By \cite[Theorem 3.5]{birkar-flip} we reach a model $f''\!:\!(Y'',\Gamma_{Y''})\to X'$ such that 
$$K_{Y''}+\Gamma_{Y''}=f''^{*}\bigl(K_{X'}+t\phi_{*}\Delta+(1-t)\phi_{*}C\bigr)+F_{Y''}$$
with $F_{Y''}\leq0$. 
Now we recall that $(X',\phi_{*}\Delta)$ and $(X',\phi_{*}C)$ are both lc. 
Combining it with the above equation we see that $(Y'',\Gamma_{Y''}-F_{Y''})$ is also lc. 
By construction we also have $K_{Y''}+\Gamma_{Y''}-F_{Y''}\sim_{\mathbb{R},\,Z'}0$. 
Since $-F_{Y''}\geq 0$ and $(Y'',0)$ is $\mathbb{Q}$-factorial klt, by \cite[Theorem 1.1]{has-mmp}, we can run the $(K_{Y''}+\Gamma_{Y''})$-MMP over $Z'$ and obtain a good minimal model $(Y'',\Gamma_{Y''})\dashrightarrow (Y''',\Gamma_{Y'''})$ over $Z'$. 
Let $\pi\!:\!Y'''\to Z$ be the contraction over $Z'$ induced by $K_{Y'''}+\Gamma_{Y'''}$, and let  $C_{Y'''}$ be the birational transform of $C_{Y}$ on $Y'''$. 
Note that ${\rm dim}\,Z={\rm dim}\,Z'$ because $Z$ is birational to $Z'$. 
We also have $K_{Y'''}+\Gamma_{Y'''}\sim_{\mathbb{R},\,Z}0$ and $K_{Y'''}+C_{Y'''}\sim_{\mathbb{R}}0$. 
Furthermore, by construction, the birational map $Y\dashrightarrow Y'''$ is a finitely many steps of the $(K_{Y}+\Gamma_{Y})$-MMP. 
Therefore we can replace $(X,\Delta)$ and $(X,C)$ by $(Y''',\Gamma_{Y'''})$ and $(Y''',C_{Y'''})$. 

In this way, to prove Theorem \ref{thmmain}, we can assume that there exists a contraction $\pi\!:\!X\to Z$ to a normal projective variety $Z$ such that ${\rm dim}\,Z<{\rm dim}\,X$ and $K_{X}+\Delta\sim_{\mathbb{R},\,Z}0$. 
\end{step3}
 
 \begin{step3}\label{step2non} 
We apply Lemma \ref{lemdeform} to $(X,C)\to Z$ (not $(X,\Delta)\to Z$) and obtain a diagram
$$
\xymatrix{
(X,C)\ar@{-->}[r] \ar[d]_{\pi}&(X_{0},C_{0})\ar^{\pi_{0}}[d]\\ 
Z&Z_{0}\ar^{h}[l]
}
$$
such that
\begin{itemize}
\item
$\pi_{0}$ and $h$ are contractions and $h$ is birational, 
\item
$(X_{0},C_{0})$ is a log birational model of $(X,C)$ and it is a projective $\mathbb{Q}$-factorial lc pair such that $(X_{0},0)$ is klt,  
\item
$K_{X_{0}}+C_{0}\sim_{\mathbb{R}}0$, 
\item
$Z_{0}$ is a projective $\mathbb{Q}$-factorial variety and $(Z_{0},0)$ is klt, and
\item
$C_{0}=C'_{0}+C''_{0}$ with $C'_{0}\geq0$ and $C''_{0}\geq0$ such that $C''_{0}\sim_{\mathbb{R},\,Z_{0}}0$ and 
any lc center of $(X_{0},C'_{0})$ dominates $Z_{0}$. 
\end{itemize} 
Let $\varphi\!:\!W\to X$ and $\varphi_{0}\!:\!W\to X_{0}$ be a common resolution. 
We define a divisor $\Psi$ on $W$ by equation $K_{W}+\Psi=\varphi^{*}(K_{X}+\Delta)$ and set $\Delta_{0}=\varphi_{0*}\Psi$. 
Note that $\Delta_{0}$ may not be effective but $t\Delta_{0}+(1-t)C_{0}$ is effective for any $0<t\ll1$ because $(X_{0},C_{0})$ is a log birational model of $(X,C)$. 
By construction $K_{X_{0}}+\Delta_{0}\sim_{\mathbb{R},\,Z_{0}}0$ and any lc center of $(X_{0}, t\Delta_{0}+(1-t)C_{0})$ is an lc center of $(X_{0},C_{0})$. 
We can easily check that we can replace $(X,\Delta)\to Z$ and $(X,C)$ by $(X_{0},t\Delta_{0}+(1-t)C_{0})\to Z_{0}$ and $(X_{0},C_{0})$. 
Therefore we can assume that 
\begin{enumerate}
\item[(i)]
$Z$ is a projective $\mathbb{Q}$-factorial variety and $(Z,0)$ is klt, 
\item[(ii)]
$C=C'+C''$ for some $C'\geq0$ and $C''\geq0$ such that $C''\sim_{\mathbb{R},\,Z}0$ and 
any lc center of $(X,C')$ dominates $Z$, and 
\item[(iii)]
any lc center of $(X, \Delta)$ is an lc center of $(X,C)$. 
\end{enumerate}
\end{step3}

\begin{step3}\label{step3non}
In this step we prove Theorem \ref{thmmain} for $(X,\Delta)$ when $C''=0$. 
In this case we have $C=C'$. 

By conditions (ii) and (iii) in Step \ref{step2non}, all lc centers of $(X,\Delta)$ and those of $(X,C)$ dominate $Z$. 
Therefore, by \cite[Corollary 3.2]{fg-bundle}, there exists $\Theta$ (resp.~$G$) on $Z$ such that $(Z,\Theta)$ is klt (resp.~$(Z,G)$ is klt) and $K_{X}+\Delta\sim_{\mathbb{R}}\pi^{*}(K_{Z}+\Theta)$ (resp.~$K_{X}+C\sim_{\mathbb{R}}\pi^{*}(K_{Z}+G)$). 
Then there is $E\geq0$ such that $K_{Z}+\Theta\sim_{\mathbb{R}}E$ by induction hypothesis. 
Thus we see that  $K_{X}+\Delta\sim_{\mathbb{R}}\pi^{*}E$ and so we are done. 
\end{step3} 

\begin{step3}\label{step4non}
By Step \ref{step3non} we can assume that $C''\neq0$. 
Then $K_{X}+C'\sim_{\mathbb{R}}-C''$ is not pseudo-effective, and hence $K_{X}+t\Delta+(1-t)C-(1-t)C''$ is not pseudo-effective for any $0<t\ll1$. 
Moreover any lc center of $(X, t\Delta+(1-t)C')$ is an lc center of $(X,C')$. 
We fix a sufficiently small $t>0$ and we replace $(X,\Delta)$ by $(X,t\Delta+(1-t)C)$. 
We also see that we can replace $C''$ by $(1-t)C''$ (at the same time $C'$ is replaced by $C'+tC''$). 
Therefore replacing $C''$ we can assume that $\Delta-C''\geq0$, $K_{X}+\Delta-C''$ is not pseudo-effective, and any lc center of $(X, \Delta-C'')$ is an lc center of $(X,C')$. 
Then by condition (ii) in Step \ref{step2non} any lc center of $(X, \Delta-C'')$ dominates $Z$. 

Now we put $\tau=\tau(X,\Delta-C'';C'')$, where the right hand side is the pseudo-effective threshold of $C''$ with respect to $(X,\Delta-C'')$. 
By construction we have $0<\tau\leq1$. 
Therefore we can replace $(X,\Delta)$ by $(X,\Delta-C''+\tau C'')$. 
We can also replace $C''$ with $\tau C''$ and replace $C'$ with $C'+(1-\tau) C''$.  
Note that any lc center of $(X,C'+(1-\tau) C'')$ is an lc center of $(X,C')$ because $\tau>0$ and $(X,C)$ is lc. 

In this way, by replacing those divisors, we can assume that
\begin{itemize}
\item $\Delta-C''\geq0$ and any lc center of $(X,\Delta-C'')$ dominates $Z$, and
\item
$K_{X}+\Delta-eC''$ is not-pseudo-effective for any $e>0$.  
\end{itemize}
In the rest of the proof we do not use $C'$.  
\end{step3}

\begin{step3}\label{step5non}
Pick divisors $D$ and $T$ on $Z$ such that $K_{X}+\Delta\sim_{\mathbb{R}}\pi^{*}D$ and $C''\sim_{\mathbb{R}}\pi^{*}T$ respectively. 
By Step \ref{step1non}, \ref{step2non} and \ref{step4non}, $(X,\Delta)\to Z$ and $C''\neq0$ satisfy 
\begin{itemize}
\item 
$(X,\Delta)$ is a projective $\mathbb{Q}$-factorial lc pair such that $(X,0)$ is klt, 
\item
$K_{X}+\Delta\sim_{\mathbb{R}}\pi^{*}D$, 
\item
$Z$ is a projective $\mathbb{Q}$-factorial variety such that $(Z,0)$ is klt,  
\item
$\Delta-C''\geq0$, $C''\geq0$, $C''\sim_{\mathbb{R}}\pi^{*}T$ and any lc center of $(X,\Delta-C'')$ dominates $Z$, and  
\item
$K_{X}+\Delta-eC''$ is not-pseudo-effective for any $e>0$.   
\end{itemize}
Therefore we can apply Lemma \ref{lemlift} and we can obtain the following diagram 
$$
\xymatrix{
(X,\Delta)\ar@{-->}[r]\ar_{\pi}[d]&(\widetilde{X},\widetilde{\Delta})\ar^{\widetilde{\pi}}[d]\\ 
Z\ar@{-->}[r]&\widetilde{Z}\ar[r]&Z^{\vee}
}
$$
such that 
\begin{itemize}
\item
$(\widetilde{X},\widetilde{\Delta})$ is a projective  $\mathbb{Q}$-factorial lc pair,  $\widetilde{Z}$ is projective and $\mathbb{Q}$-factorial, and $Z^{\vee}$ is a normal projective variety,
\item
the maps $X\dashrightarrow \widetilde{X}$ and $Z\dashrightarrow\widetilde{Z}$ are birational contractions, 
\item
the morphism $\widetilde{Z}\to Z^{\vee}$ is a contraction such that  $\rho(\widetilde{Z}/Z^{\vee})=1$ and ${\rm dim}\,Z^{\vee}<{\rm dim}\,\widetilde{Z}$, and  
\item
$K_{\widetilde{X}}+\widetilde{\Delta}\sim_{\mathbb{R},\,Z^{\vee}}0$. 
\end{itemize}
Here $\widetilde{\Delta}$ is the birational transform of $\Delta$ on $\widetilde{X}$. 
We take a log resolution $Y_{1}\to X$ of $(X,{\rm Supp}\,(\Delta+C))$ such that the induced map $Y_{1}\dashrightarrow \widetilde{X}$ is a morphism. 
Let $(Y_{1},\Delta_{Y_{1}})$ and $(Y_{1},C_{Y_{1}})$ be log smooth models of $(X,\Delta)$ and $(X,C)$ respectively. 
Then we can apply the argument of Step \ref{step1non} to $Y_{1}\to \widetilde{X}\to Z^{\vee}$ since $(\widetilde{X},\widetilde{\Delta})$ is lc and $K_{\widetilde{X}}+\widetilde{\Delta}\sim_{\mathbb{R},\,Z^{\vee}}0$. 
Thus we can get a contraction $Y'''_{1}\to Z_{1}$ over $Z^{\vee}$ and lc pairs $(Y'''_{1},\Gamma_{Y'''_{1}})$ and $(Y'''_{1},C_{Y'''_{1}})$ such that $K_{Y'''_{1}}+\Gamma_{Y'''_{1}}\sim_{\mathbb{R},\,Z_{1}}0$ and $K_{Y'''_{1}}+C_{Y'''_{1}}\sim_{\mathbb{R}}0$. 
Here $C_{Y'''_{1}}$ is  the birational transform of $C_{Y_{1}}$ on $Y'''_{1}$ and $\Gamma_{Y'''_{1}}$ is the birational transform of $t\Delta_{Y_{1}}+(1-t)C_{Y_{1}}$ on $Y'''_{1}$ for a sufficiently small $t>0$. 
Furthermore we can check that we may replace $(X,\Delta)\to Z$ and $(X,C)$ by $(Y'''_{1},\Gamma_{Y'''_{1}})\to Z_{1}$ and $(Y'''_{1},C_{Y'''_{1}})$. 
For details, see the second paragraph of Step \ref{step1non}. 

We replace $(X,\Delta)\to Z$ by $(Y'''_{1},\Gamma_{Y'''_{1}})\to Z_{1}$. 
Then the dimension of $Z$ is strictly decreased. 
This is crucial to the proof. 
\end{step3}

\begin{step3}
From now on we repeat the argument of Step \ref{step2non}-\ref{step5non}. 

By the same argument as in Step \ref{step2non}, we can assume $(X,\Delta)\to Z$ and $(X,C)$ satisfy conditions (i), (ii) and (iii) in Step \ref{step2non}. 
Then there are two possibilities: 
\begin{itemize}
\item
Theorem \ref{thmmain} holds for $(X,\Delta)$ (cf.~Step \ref{step3non}), or
\item
we can find a contraction $Y'''_{2}\to Z_{2}$ with ${\rm dim}\,Z_{2}<{\rm dim}\,Z$ and lc pairs $(Y'''_{2},\Gamma_{Y'''_{2}})$ and $(Y'''_{2},C_{Y'''_{2}})$ such that $K_{Y'''_{2}}+\Gamma_{Y'''_{2}}\sim_{\mathbb{R},\,Z_{2}}0$, 
$K_{Y'''_{2}}+C_{Y'''_{2}}\sim_{\mathbb{R}}0$ and Theorem \ref{thmmain} for $(X,\Delta)$ is implied from Theorem \ref{thmmain} for $(Y'''_{2},\Gamma_{Y'''_{2}})$ (cf.~Step \ref{step4non} and \ref{step5non}).  
\end{itemize}
If we are in the first case we stop the argument. 
If we are in the second case we replace $(X,\Delta)\to Z$ by $(Y'''_{2},\Gamma_{Y'''_{2}})\to Z_{2}$ and repeat the argument of Step \ref{step2non}-\ref{step5non}. 
Each time we replace $(X,\Delta)\to Z$ in the argument of Step \ref{step5non}, the dimension of $Z$ is strictly decreased. 
Therefore this discussion eventually stops. 
Thus we can prove Theorem \ref{thmmain} and so we are done. 
\end{step3}
\end{proof}


\end{document}